\newtheorem{pro}{Proposition}[section]
\newtheorem{thm}[pro]{Theorem}
\newtheorem{lem}[pro]{Lemma}
\newtheorem{clm}[pro]{Claim}
\newtheorem{cor}[pro]{Corollary}
\theoremstyle{definition}
\newtheorem{dfn}[pro]{Definition}
\theoremstyle{remark}
\newcommand{\VV}{\mathcal V}
\newcommand{\WW}{\mathcal W}
\newcommand{\bdy}{\partial}
\title{Barriers to Topologically Minimal Surfaces}
\date{\today}
\address{Pitzer College}
\email{bachman@pitzer.edu}
\author{David Bachman}
\begin{document}

\begin{abstract}
In earlier work we introduced topologically minimal surfaces as the analogue of geometrically minimal surfaces. Here we strengthen the analogy by showing that complicated amalgamations act as barriers to low genus, topologically minimal surfaces.
\end{abstract}

\maketitle


\section{Introduction}

Suppose $X$ and $Y$ are compact, orientable, irreducible 3-manifolds with incompressible, homeomorphic boundary. Let $\phi:\bdy X \to \bdy Y$ denote any homeomorphism, and $\psi:\bdy X \to \bdy X$ some pseudo-Anosov. Let $M_n$ denote the manifolds obtained from $X$ and $Y$ by gluing their boundaries via the maps $\phi \psi^n$. Let $F$ denote the image of $\bdy X$ in $M_n$. For increasing values of $n$ we see longer and longer regions homeomorphic to $F \times I$ in $M_n$. Any minimal surface embedded in $M_n$ that crosses through such a region will have large area, and thus large genus by the Gauss-Bonnet Theorem. Hence, if $M_n$ contains a low genus minimal surface, it must be contained in $X$ or $Y$. The mantra is this: complicated amalgamations act as barriers to low genus minimal surfaces. 

In \cite{TopIndexI} we defined a surface with empty or non-contractible disk complex to be {\it topologically minimal}. Such surfaces generalize more familiar classes, such as {\it incompressible}, {\it strongly irreducible}, and {\it critical}. The main result of the present paper is given in Theorem \ref{t:DistanceBoundTheorem}, which says that complicated amalgamations act as barriers to low genus, topologically minimal surfaces. This gives further evidence that topologically minimal surfaces are the appropriate topological analogue to geometrically minimal surfaces. 

Fix incompressible, $\bdy$-incompressible surfaces $Q_X \subset X$ and $Q_Y \subset Y$ that have maximal Euler characteristic. Here we measure the complexity of any gluing map $\phi: \bdy X \to \bdy Y$ by the distance, in the curve complex of $\bdy Y$, between $\phi(\bdy Q_X)$ and $\bdy Q_Y$. (When $\bdy Y \cong T^2$ then distance is measured in the Farey graph.) So, for example, the gluing maps used to construct the manifolds $M_n$ above increase in complexity as $n$ increases. 

A rough sketch of the proof of Theorem \ref{t:DistanceBoundTheorem} is as follows. In \cite{TopIndexI} we showed that any topologically minimal surface $H$ can be isotoped to meet the gluing surface $F$ so that $H_X=H \cap X$ and $H_Y=H \cap Y$ are topologically minimal in $X$ and $Y$. In Section \ref{s:IndexRelBdy} we show that  if $H$ is not isotopic into $Y$, then $H_X$ can be $\bdy$-compressed in $X$ to a surface $H'_X$ in a more restrictive class. Such surfaces are said to be topologically minimal {\it with respect to $\bdy X$}. A short argument then shows that the distance between the loops of $\bdy H_X$ and $\bdy H'_X$ is bounded by a function of the difference between the Euler characteristics of these surfaces. In Section \ref{s:DistanceBoundSection} we apply a technique of Tao Li \cite{li:08} to show that the distance between the loops of $\bdy H'_X$ and the loops of $\bdy Q_X$ is bounded by an explicit function of the Euler characteristics of $H'_X$ and $Q_X$. 

Similar statements can be made about the surface $H_Y$. That is, if $H$ is not isotopic into $X$ then there is a surface $H'_Y$, obtained from $H_Y$ by $\bdy$-compressing in $Y$, such that $\bdy H'_Y$ is a bounded distance away from $\bdy Q_Y$. Putting all of this information together then says that if $H$ is not isotopic into $X$ or $Y$, then the distance between $\bdy Q_X$ and $\bdy Q_Y$ is bounded by some constant $K$ times the genus of $H$, where $K$ is an explicit linear function depending only on the Euler characteristics of $Q_X$ and $Q_Y$. 

Theorem \ref{t:DistanceBoundTheorem} also holds in more generality. Another situation to which it applies is when $X$ is a connected 3-manifold with homeomorphic boundary components $F_-$ and $F_+$. Then the theorem asserts that if $F_-$ is glued to $F_+$ by a sufficiently complicated map, then any low genus, topologically minimal surface in the resulting 3-manifold is isotopic into $X$. 

The index 0, 1, and 2 cases of Theorem \ref{t:DistanceBoundTheorem} play a central role in the two sequels to this paper, \cite{StabilizationResults} and \cite{AmalgamationResults}. In these papers we answer a variety of questions dealing with stabilization and isotopy of Heegaard surfaces. The first contains a construction of a pair of inequivalent Heegaard splittings that require several stabilizations to become equivalent. In the second, we show that when low genus, unstabilized, $\bdy$-unstabilized Heegaard surfaces are amalgamated via a ``sufficiently complicated" gluing, the result is an unstabilized Heegaard surface. 

The author thanks Tao Li for helpful conversations regarding his paper, \cite{li:08}, on which Section \ref{s:DistanceBoundSection} is based. Comments from Saul Schleimer were also helpful.

\section{Topologically minimal surfaces.}

In this section we review the definition of the {\it topological index} of a surface, given in \cite{TopIndexI}. Let $H$ be a properly embedded, separating surface with no torus components in a compact, orientable 3-manifold $M$. Then the {\it disk complex},  $\Gamma(H)$, is defined as follows:
	\begin{enumerate}
		\item  Vertices of $\Gamma(H)$ are isotopy classes of compressions for $H$. 
		\item A set of $m+1$ vertices forms an $m$-simplex if there are representatives for each that are pairwise disjoint. 
	\end{enumerate}

\begin{dfn}
\label{d:Indexn}
The {\it homotopy index} of a complex $\Gamma$ is defined to be 0 if $\Gamma=\emptyset$, and the smallest $n$ such that $\pi_{n-1}(\Gamma)\ne 1$, otherwise. We say the {\it topological index} of a surface $H$ is the homotopy index of its disk complex, $\Gamma(H)$. If $H$ has a topological index then we say it is {\it topologically minimal}. 
\end{dfn}

In \cite{TopIndexI} we show that incompressible surfaces have topological index 0, strongly irreducible surfaces (see \cite{cg:87}) have topological index 1, and critical surfaces (see \cite{crit}) have topological index 2. In \cite{existence} we show that for each $n$ there is a manifold that contains a surface whose topological index is $n$.


\begin{thm}[\cite{TopIndexI}, Theorem 3.8.]
\label{t:OriginalIntersection}
Let $F$ be a properly embedded, incompressible surface in an irreducible 3-manifold $M$. Let $H$ be a properly embedded surface in $M$ with topological index $n$. Then $H$ may be isotoped so that
	\begin{enumerate}
		\item $H$ meets $F$ in $p$ saddles, for some $p \le n$, and 
		\item the topological index of $H \setminus N(F)$ in $M \setminus N(F)$, plus $p$, is at most $n$. 
	\end{enumerate}
\end{thm}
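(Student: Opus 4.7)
The plan is to isotope $H$ into a carefully chosen general position with respect to $F$ and then read off the bounds on $p$ and the residual topological index from a homotopical analysis of $\Gamma(H)$. First I would put $H$ into generic position, so that $H \cap F$ consists of transverse circles together with finitely many saddle tangencies, and among all such representatives minimize the number $p$ of saddles lexicographically with $|H \cap F|$. Because $F$ is incompressible in the irreducible manifold $M$, standard innermost-disk and outermost-arc surgeries let me further assume no loop of $H \cap F$ bounds a disk in $F \setminus H$ or in $H \setminus F$, and no sphere component of $H$ bounds a ball. These minimizations are what will eventually force the topological index bound.

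Next I would relate $\Gamma(H)$ to $\Gamma_0 := \Gamma(H \setminus N(F))$. Given any compressing disk $D$ for $H$, I would use incompressibility of $F$ to remove all closed curves of $D \cap F$ by innermost-disk surgeries, leaving only arcs. Cutting $D$ along these arcs decomposes it into sub-disks, each of which is a compressing disk for $H \setminus N(F)$ in $M \setminus N(F)$. A disjoint collection of compressing disks for $H$ produces a disjoint collection for $H \setminus N(F)$, so this construction promotes to a simplicial map from a subdivision of $\Gamma(H)$ into $\Gamma_0 * \Delta^{p-1}$, where the cone factor records which side of each saddle the sub-disks occupy. The crucial observation is that each saddle contributes at most a single cone coordinate: a compressing disk that ``crosses'' a given saddle differs from one that does not by a single edge-swap in $\Gamma(H)$.

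Finally, given a non-trivial class in $\pi_{n-1}(\Gamma(H))$, pushing it through this map produces a non-trivial class in $\pi_{n-1}(\Gamma_0 * \Delta^{p-1})$, and the standard connectivity formula for joins, $\text{conn}(A*B) = \text{conn}(A) + \text{conn}(B) + 2$, forces $\pi_{n-p-1}(\Gamma_0)$ to be non-trivial; that is, the topological index of $H \setminus N(F)$ is at most $n - p$, which gives conclusion (2). The bound $p \leq n$ in conclusion (1) follows because $p > n$ would make the target join $(n-1)$-connected, contradicting the existence of the non-trivial sphere. The main obstacle I anticipate is justifying that the cut-and-reassemble procedure actually yields a well-defined simplicial map at the level of the full complex rather than just vertices: one must verify that disjoint compressing disks can be isotoped so their arc patterns on $F$ are simultaneously compatible, and that the resulting sub-disk collections are pairwise disjoint. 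This is where the lexicographic minimization of $(p, |H \cap F|)$ is essential, since any failure of compatibility would produce a lower-complexity representative and contradict minimality.
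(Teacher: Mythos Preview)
This theorem is not proved in the present paper; it is imported verbatim as Theorem~3.8 of \cite{TopIndexI}, so there is no proof here to compare your proposal against. That said, your outline has two genuine gaps that would have to be addressed before it could stand on its own.

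First, the direction of your map is wrong for the conclusion you want. You propose to build a simplicial map from (a subdivision of) $\Gamma(H)$ into $\Gamma_0 * \Delta^{p-1}$ and then push the non-trivial $(n-1)$-sphere forward. But forward images of non-trivial homotopy classes under simplicial maps are typically trivial; nothing in your construction prevents the image sphere from bounding in the target. The join-connectivity formula tells you when the \emph{target} is highly connected, which would kill your class, not detect it. What you actually need is a mechanism that, assuming the conclusion fails, produces an \emph{extension} of the sphere over a ball in $\Gamma(H)$ itself, contradicting its non-triviality. (Compare the obstruction-theoretic argument used in the proof of Theorem~\ref{t:MainTheorem} in this paper: one assumes the relevant link complexes are sufficiently connected and builds the null-homotopy cell by cell.)

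Second, the map itself is not well-defined at the vertex level. When you cut a compressing disk $D$ for $H$ along the arcs of $D \cap F$, the resulting sub-disks need not be compressing disks for $H \setminus N(F)$: their boundaries can be inessential on $H \setminus N(F)$ even after your innermost-loop normalizations. Moreover, the claimed $\Delta^{p-1}$ factor ``recording which side of each saddle the sub-disks occupy'' is not a well-defined combinatorial datum---a single compressing disk can produce sub-disks on both sides of a given saddle, and different isotopy representatives of $D$ can yield different arc patterns on $F$. The lexicographic minimization of $(p,|H\cap F|)$ controls the position of $H$, not the positions of all compressing disks simultaneously, so it does not rescue this step.
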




In addition to this result about topological index, we will need the following:

\begin{lem}
\label{l:EssentialBoundary}
Suppose $H$ is a topologically minimal surface which is properly embedded in a 3-manifold $M$ with incompressible boundary. Then each loop of $\bdy H$ either bounds a component of $H$ that is a boundary-parallel disk, or is essential on $\bdy M$.
\end{lem}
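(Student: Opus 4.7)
The plan is to argue by contradiction. Suppose some loop $\gamma$ of $\bdy H$ is inessential on $\bdy M$, yet the component $C$ of $H$ containing $\gamma$ is not a boundary-parallel disk. The goal is to produce a compressing disk for $H$ that serves as a cone vertex of $\Gamma(H)$, forcing $\Gamma(H)$ to be contractible and thereby contradicting topological minimality.

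First, pick $\gamma$ so that the disk $D \subset \bdy M$ it bounds is innermost, meaning $\mathrm{int}(D) \cap \bdy H = \emptyset$. Push $D$ a small distance into $M$ along a collar of $\bdy M$ to obtain an embedded disk $D' \subset M$ with boundary $\gamma' \subset H$ parallel in $H$ to $\gamma$. Suppose first that $C$ is a disk with $\bdy C = \gamma$. Then $C \cup D$ is a $2$-sphere in $M$, which bounds a ball by the irreducibility of $M$ (a standing hypothesis in this framework); this ball exhibits $C$ as isotopic rel $\bdy$ to $D$, contradicting the assumption that $C$ is not boundary-parallel. Hence $C$ is not a disk, so $\gamma'$ is essential on $H$ and $D'$ is a compressing disk for $H$. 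In particular, the vertex $v = [D']$ lies in $\Gamma(H) \ne \emptyset$.

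The main step is to show that $v$ is a cone vertex of $\Gamma(H)$. Given any simplex $\sigma$ represented by disjoint compressing disks $E_1, \dots, E_k$, the aim is to isotope them---keeping them mutually disjoint---until they are all disjoint from $D'$; then $\sigma \cup \{v\}$ spans a simplex, so $v$ is a cone vertex. Minimize $\sum_i |E_i \cap D'|$. Closed curves of intersection are removed by innermost-disk surgery on $D'$ using the irreducibility of $M$. Arc intersections are removed by an outermost-arc argument: an outermost arc on some $E_i$ cuts off a subdisk $E_0 \subset E_i$ bounded by an arc in $H$ together with an arc $\beta \subset D'$, and $\beta$ splits $D'$ into two subdisks $D_1, D_2$. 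Each of the disks $E_0 \cup D_1$ and $E_0 \cup D_2$ has boundary on $H$; at least one of these boundary curves is essential on $H$, and replacing $E_i$ by the corresponding disk (after a small pushoff from $D'$) strictly decreases the intersection with $D'$, contradicting minimality. That at least one boundary is essential is ensured by combining the innermost choice of $D$ with the incompressibility of $\bdy M$: if both boundaries were inessential on $H$, the two resulting disks would cap off into $2$-spheres in $M$ that, by irreducibility, permit a further reduction of intersections before the surgery was even needed.

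It follows that $\Gamma(H)$ is contractible, which combined with $\Gamma(H) \ne \emptyset$ contradicts the topological minimality of $H$. The main obstacle is precisely the essentiality verification inside the outermost-arc step; ruling out the degenerate configuration in which both surgery outputs have inessential boundary on $H$ is exactly where the hypothesis that $\bdy M$ is incompressible, together with the innermost choice of $D$, is needed.
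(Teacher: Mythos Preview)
Your overall strategy matches the paper's: exhibit the pushed-in innermost disk $D'$ as a cone vertex of $\Gamma(H)$, forcing $\Gamma(H)$ to be contractible. The divergence is in how you argue that $D'$ can be made disjoint from the disks representing an arbitrary simplex.

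The paper's argument for this is a one-line observation that you are missing. A compressing disk $E$ for $H$ has $\partial E \subset H$ and is disjoint from $\partial M$; being compact, it therefore misses a sufficiently thin collar of $\partial M$. Since $D'$ is obtained by pushing a subdisk of $\partial M$ into such a collar, any finite collection of pairwise disjoint compressing disks $E_1,\dots,E_k$ is automatically disjoint from $D'$ once $D'$ is taken close enough to $\partial M$. No surgery is required, and the cone property follows immediately.

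Your surgery route, besides being unnecessary, has a genuine gap in the arc step. You begin by saying you will \emph{isotope} the $E_i$ while keeping them pairwise disjoint, but in the outermost-arc argument you \emph{replace} $E_i$ by $E_0 \cup D_j$. That disk is in general not isotopic to $E_i$, so after the replacement you no longer represent the original simplex $\sigma$; reducing intersections for this new collection says nothing about whether $\sigma \cup \{v\}$ spans a simplex, and hence does not contradict your minimality assumption. The accompanying claim that at least one of the two surgered boundaries is essential on $H$ is also only sketched. Fortunately this entire case is vacuous: once each $\partial E_i$ is isotoped on $H$ off a collar of $\partial H$, one has $\partial E_i \cap \gamma' = \emptyset$, so $E_i \cap D'$ consists only of closed curves, and these disappear by the collar observation above without any appeal to irreducibility or innermost-disk surgery.
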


\begin{proof}
Begin by removing from $H$ all components that are boundary-parallel disks. If nothing remains, then the result follows. Otherwise, the resulting surface (which we continue to call $H$) is still topologically minimal, as it has the same disk complex. Now, let $\alpha$ denote a loop of $\bdy H$ that is innermost among all such loops that are inessential on $\bdy M$ . Then $\alpha$ bounds a compression $D$ for $H$ that is disjoint from all other compressions. Hence, every maximal dimensional simplex of $\Gamma(H)$ includes the vertex corresponding to $D$. We conclude $\Gamma(H)$ is contractible to $D$, and thus $H$ was not topologically minimal. 
\end{proof}

\section{Topological index relative to boundaries.}
\label{s:IndexRelBdy}

In this section we define the topological index of a surface $H$ in a 3-manifold $M$ {\it with respect to $\bdy M$.} We then show that we may always obtain such a surface from a topologically minimal surface by a sequence of $\bdy$-compressions. 

Let $H$ be a properly embedded, separating surface with no torus components in a compact, orientable 3-manifold $M$. Then the complex $\Gamma(H;\bdy M)$, is defined as follows:
	\begin{enumerate}
		\item  Vertices of $\Gamma(H;\bdy M)$ are isotopy classes of compressions and $\bdy$-compressions for $H$. 
		\item A set of $m+1$ vertices forms an $m$-simplex if there are representatives for each that are pairwise disjoint. 
	\end{enumerate}

\begin{dfn}
\label{d:Indexn}
We say the topological index of a surface $H$ {\it with respect to $\bdy M$} is the homotopy index of the complex $\Gamma(H;\bdy M)$. If $H$ has a topological index with respect to $\bdy M$ then we say it is {\it topologically minimal with respect to $\bdy M$}. 
\end{dfn}

In Corollary 3.9 of \cite{TopIndexI} we showed that a topologically minimal surface can always be isotoped to meet an incompressible surface in a collection of essential loops. The exact same argument, with the words ``compression or $\bdy$-compression" substituted for ``compression" and ``innermost loop/outermost arc" substituted for ``innermost loop," shows:

\begin{lem}
\label{l:EssentialIntersection}
Let $M$ be a compact, orientable, irreducible 3-manifold with incompressible boundary. Let $H$ and $Q$ be properly embedded surfaces in $M$, where $H$ is topologically minimal with respect to $\bdy M$ and $Q$ is both incompressible and $\bdy$-incompressible. Then $H$ may be isotoped so that it meets $Q$ in a (possibly empty) collection of loops and arcs that are essential on both.  
\end{lem}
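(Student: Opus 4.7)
The plan is to adapt the proof of Corollary 3.9 of \cite{TopIndexI} exactly as the author suggests, with the two indicated substitutions. The strategy is to isotope $H$ to minimize $|H \cap Q|$ and then argue by contradiction: if an inessential intersection persists, we will produce a vertex of $\Gamma(H;\bdy M)$ connected by an edge to every other vertex, making $\Gamma(H;\bdy M)$ a cone, hence contractible, in contradiction with $H$ being topologically minimal with respect to $\bdy M$.

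First I would put $H$ in general position with $Q$ and isotope $H$ within its isotopy class to minimize $|H \cap Q|$. After discarding any components of $H$ that are boundary-parallel disks (as in Lemma \ref{l:EssentialBoundary}; the same proof applies verbatim to minimality with respect to $\bdy M$), assume for contradiction that some component of $H \cap Q$ is inessential on $H$ or on $Q$. Since $Q$ is incompressible and $\bdy$-incompressible, applying an innermost-loop / outermost-arc argument on $Q$ yields a disk $D \subset Q$ whose boundary is either an entire inessential loop of intersection, or an inessential arc of intersection completed by an arc in $\bdy Q \subset \bdy M$, and whose interior is disjoint from $H$.

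Next, I would dispose of the case that $\bdy D$ is inessential on $H$: then $D$ pairs with a disk (or half-disk) of $H$ to bound a ball (or half-ball) in $M$, using the irreducibility of $M$ and incompressibility of $\bdy M$, and pushing $H$ across this ball strictly decreases $|H \cap Q|$, contradicting minimality. Consequently $\bdy D$ is essential on $H$, so $D$ is a compression disk (if $\bdy D$ is a loop) or a $\bdy$-compression disk (if $\bdy D$ is an arc) for $H$, representing a vertex $v_D \in \Gamma(H;\bdy M)$.

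The core step, and the one I expect to be the main obstacle, is showing that every other vertex $v_E$ of $\Gamma(H;\bdy M)$ admits a representative disjoint from $D$. For this I would take any compression or $\bdy$-compression disk $E$ for $H$ representing $v_E$, and minimize $|E \cap Q|$ within its isotopy class of compression/$\bdy$-compression disks. The incompressibility and $\bdy$-incompressibility of $Q$, combined with innermost-loop/outermost-arc surgeries on $E \cap Q$, force all intersections to be removable without changing the isotopy class of $E$ as a compression or $\bdy$-compression disk; once $E$ is disjoint from $Q$, it is in particular disjoint from $D \subset Q$. This shows $v_D$ is joined by an edge to every vertex of $\Gamma(H;\bdy M)$, so the complex is contractible to $v_D$, contradicting topological minimality with respect to $\bdy M$ and completing the proof.
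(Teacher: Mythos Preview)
Your proposal is correct and follows precisely the route the paper indicates: you have carried out the adaptation of Corollary~3.9 of \cite{TopIndexI} with the two substitutions the author prescribes (``compression or $\bdy$-compression'' for ``compression'' and ``innermost loop/outermost arc'' for ``innermost loop''), arriving at the cone-point contradiction in $\Gamma(H;\bdy M)$. This is exactly the paper's argument, which it leaves implicit by reference.

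One small remark on the write-up: in the final step you isotope $E$ off all of $Q$, whereas it suffices (and is slightly cleaner) to isotope $E$ off the specific disk $D\subset Q$, since $\mathrm{int}(D)\cap H=\emptyset$ ensures the relevant bigon/ball moves take place entirely on one side of $H$ and hence preserve the isotopy class of $E$ as a compression or $\bdy$-compression. Your version is also standard, but when writing it out in full you should note why the innermost-loop/outermost-arc surgeries on $E\cap Q$ do not change the vertex of $\Gamma(H;\bdy M)$ represented by $E$.
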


\begin{dfn}
\label{d:H/D}
If $D$ is a compression or $\bdy$-compression for a surface $H$ then we construct the surface $H/D$ as follows. Let $M(H)$ denote the manifold obtained from $M$ by cutting open along $H$. Let $B$ denote a neighborhood of $D$ in $M(H)$. The surface $H/D$ is obtained from $H$ by removing $B \cap H$ and replacing it with the frontier of $B$ in $M(H)$. The surface $H/D$ is said to have been obtained from $H$ by {\it compressing} or {\it $\bdy$-compressing} along $D$. Similarly, suppose $\tau$ is some simplex of $\Gamma(H;\bdy M)$ and $\{D_i\}$ are pairwise disjoint representatives of the vertices of $\tau$. Then $H/\tau$ is defined to be the surface obtained from $H$ by simultaneously compressing/$\bdy$-compressing along each disk $D_i$. 
\end{dfn}

We leave the proof of the following lemma as an exercise for the reader.

\begin{lem}
\label{l:CompressionEffect}
Suppose $M$ is an irreducible 3-manifold with incompressible boundary. Let $D$ be a $\bdy$-compression for a properly embedded surface $H$. Then $\Gamma(H/D)$ is the subset of $\Gamma(H)$ spanned by those compressions that are disjoint from $D$. \qed
\end{lem}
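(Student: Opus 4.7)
The plan is to establish the stated equality by exhibiting a simplex-preserving bijection between the vertices of $\Gamma(H/D)$ and the isotopy classes of compressions of $H$ admitting a representative disjoint from $D$. Let $B$ denote a small regular neighborhood of $D$: then $B$ is a ball, $H \cap B$ is a rectangle neighborhood $N(\alpha)$ of the arc $\alpha = D \cap H$, and $(H/D) \cap B$ consists of two parallel copies $D_+, D_-$ of $D$ on $\bdy B$. The surfaces $H$ and $H/D$ coincide outside $B$, and the arcs $\alpha_\pm = \bdy D_\pm \cap (H\setminus N(\alpha))$ are the ``scars'' of the $\bdy$-compression.

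In one direction, suppose $E$ is a compression of $H$ with a representative disjoint from $D$. That representative is automatically disjoint from $B$, so $\bdy E \subset H \setminus N(\alpha) \subset H/D$ and the interior of $E$ is disjoint from $H/D$. It remains to check $\bdy E$ is essential in $H/D$. If not, $\bdy E$ bounds a disk $\Delta \subset H/D$; a standard disk-exchange argument replaces the subsurfaces $\Delta \cap D_\pm$ (which have boundary only on $\alpha_\pm$, since $\bdy \Delta = \bdy E$ is disjoint from $D_\pm$) by corresponding subsurfaces of $N(\alpha)$ across the ball $B$, producing a disk in $H$ bounded by $\bdy E$ and contradicting essentiality in $H$.

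Conversely, suppose $E$ is a compression of $H/D$. Standard innermost loop / outermost arc arguments on $E \cap \bdy B$ isotope $E$ to be disjoint from $\bdy B$: loops of intersection bound subdisks on the sphere $\bdy B$ and may be removed, while any arc of intersection would have to lie on the rectangle $H \cap B$ with endpoints on $\bdy(H \cap B)$, but every such endpoint is forbidden since the interior of $E$ meets neither $\bdy M$ nor the disks $D_\pm \subset H/D$. Because $\bdy E$ is essential in $H/D$, the isotoped $E$ cannot lie entirely inside $B$ (its boundary would then lie in $D_\pm$ and bound a subdisk there), so $E$ lies in $M \setminus B$, disjoint from $D$. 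The dual disk-exchange argument, exchanging the roles of $D_\pm$ and $N(\alpha)$, shows $\bdy E$ is essential in $H$.

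Finally, the isotopies above can be performed compatibly on a pairwise-disjoint collection of compressions, and pairwise-disjoint compressions of $H$ disjoint from $D$ are automatically pairwise-disjoint when reinterpreted as compressions of $H/D$, so the simplex structure is preserved. The principal obstacle is the essentiality verification, since a curve in $H \setminus N(\alpha)$ could in principle be essential in exactly one of $H$ or $H/D$; this is resolved uniformly by the disk-exchange arguments above, which rely only on the fact that $H$ and $H/D$ differ by a local modification within the ball $B$.
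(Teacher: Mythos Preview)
The paper does not prove this lemma at all: it is stated with a \qed{} and the sentence ``We leave the proof of the following lemma as an exercise for the reader.'' So there is nothing to compare against, and your write-up is in fact supplying the missing argument.

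Your approach is the standard one and is essentially correct. One small point: in the converse direction, your dismissal of arcs in $E\cap\partial B$ is phrased a bit loosely. You say such an arc ``would have to lie on the rectangle $H\cap B$ with endpoints on $\partial(H\cap B)$,'' and then rule out endpoints because the \emph{interior} of $E$ avoids $\partial M$ and $D_\pm$; but endpoints of arcs of $E\cap\partial B$ lie on $\partial E$, not on $\operatorname{int}(E)$, and $\partial E$ can a priori run through $D_\pm$. The cleaner formulation is to first isotope $\partial E$ in $H/D$ off the disks $D_\pm$ (any arc or loop of $\partial E\cap D_\pm$ is inessential in the disk $D_\pm$), after which $\partial E$ is disjoint from $B$ and $E\cap\partial B$ consists only of loops in $N(\alpha)$, to which your innermost-loop argument applies directly. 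With that adjustment the proof goes through.
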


\begin{thm}
\label{t:MainTheorem}
Suppose $M$ is an irreducible 3-manifold with incompressible boundary.  Let $H$ be a surface whose topological index is $n$. Then either $H$ has topological index at most $n$ with respect to $\bdy M$, or there is a simplex $\tau$ of $\Gamma(H;\bdy M) \setminus \Gamma(H)$, such that the topological index of $\Gamma(H/\tau)$ is at most $n-{\rm dim}(\tau)$. 
\end{thm}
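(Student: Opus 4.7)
The plan is to argue by contrapositive. Assume that $H$ does not have topological index at most $n$ with respect to $\bdy M$, so $\Gamma(H;\bdy M)$ has homotopy index strictly greater than $n$, and is therefore $(n-1)$-connected. Under this assumption I will produce the required simplex $\tau \in \Gamma(H;\bdy M) \setminus \Gamma(H)$ with $\Gamma(H/\tau)$ of index at most $n - \dim(\tau)$.

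First, since $\Gamma(H)$ has topological index $n$, I pick a map $\phi : S^{n-1} \to \Gamma(H)$ representing a nontrivial element of $\pi_{n-1}(\Gamma(H))$. Under the assumption that $\Gamma(H;\bdy M)$ is $(n-1)$-connected, the composition of $\phi$ with the inclusion $\Gamma(H) \hookrightarrow \Gamma(H;\bdy M)$ extends to some $\Phi : D^n \to \Gamma(H;\bdy M)$, which I may take to be simplicial after a suitable triangulation of $D^n$. Since $\phi$ is nontrivial in $\pi_{n-1}(\Gamma(H))$, $\Phi$ cannot be homotoped rel boundary into $\Gamma(H)$; in particular, $\Phi$ must hit at least one $\bdy$-compression vertex.

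Next I locate $\tau$. For each simplex $\sigma$ of $D^n$, let $b(\sigma) \subseteq \Phi(\sigma)$ denote the subsimplex spanned by the $\bdy$-compression vertices of $\Phi(\sigma)$. Choose $\tau = b(\sigma_0)$ of maximal dimension $d$ over all $\sigma$. I claim that $\Gamma(H/\tau)$ has homotopy index at most $n - d$. The subcomplex $K \subseteq D^n$ consisting of those $\sigma$ with $\tau \subseteq \Phi(\sigma)$ has the property, by the maximality of $d$, that every such $\Phi(\sigma)$ has the form $\tau * \sigma'$ with $\sigma'$ a simplex of compression vertices disjoint from $\tau$; by the natural generalization of Lemma \ref{l:CompressionEffect} to simplices of $\bdy$-compressions, $\sigma' \in \Gamma(H/\tau)$. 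Thus $\Phi(K) \subseteq \tau * \Gamma(H/\tau)$. Now consider the link of $K$ inside $D^n$, appropriately defined so as to yield a sphere of dimension $n - d - 1$; the restriction of $\Phi$ to this link lands in $\Gamma(H/\tau)$. A minimality argument (minimize the number of appearances of maximum-dimension pure $\bdy$-compression simplices in $\Phi$, lexicographically over $d$) shows this sphere must be nontrivial in $\pi_{n-d-1}(\Gamma(H/\tau))$: were it null-homotopic, a null-homotopy could be grafted onto $\Phi$ to reroute it around $\tau$, either lowering $d$ or reducing the number of top-dimensional $\bdy$-simplices used, eventually removing all $\bdy$-compression vertices from $\Phi$ and contradicting nontriviality of $\phi$.

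The main obstacle I expect is formalizing the ``dual link sphere'' step. One must arrange the triangulation of $D^n$ carefully so that the locus $K$ and its complement have the right combinatorial structure for the link in question to be an honest $(n-d-1)$-sphere, and then implement the rerouting so that a nullhomotopy of the sphere inside $\Gamma(H/\tau)$ really does produce a globally defined replacement for $\Phi$ avoiding $\tau$. A minimal-counterexample framework weighted by the dimension and multiplicity of pure $\bdy$-compression subsimplices in the image of $\Phi$ is the natural setting in which to carry this out; the remaining content of the proof is essentially the same inductive ``push-off" scheme used to establish Theorem \ref{t:OriginalIntersection} in \cite{TopIndexI}, with $\bdy$-compressions playing the role that compressions played there.
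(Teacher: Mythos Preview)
Your setup matches the paper's exactly: start with a nontrivial $\phi:S^{n-1}\to\Gamma(H)$, observe that if the first alternative fails then $\Gamma(H;\bdy M)$ is $(n-1)$-connected, and extend to a simplicial $\Phi:D^n\to\Gamma(H;\bdy M)$. From there the two arguments diverge.

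You try to locate a \emph{single} simplex $\tau$ directly, by taking $\Phi$ of minimal complexity (lexicographic on the maximal dimension of a pure $\bdy$-compression face in the image and then on the number of such faces), picking a maximal $\bdy$-face $\tau$, and arguing that the link of a preimage simplex gives a nontrivial $(n-d-1)$-sphere in $\Gamma(H/\tau)$. This is sound in outline: for a single $d$-simplex $\sigma_0\subset D^n$ with $\Phi(\sigma_0)=\tau$, the link really is $S^{n-d-1}$, maximality of $d$ forces $\Phi(\mathrm{lk}(\sigma_0))\subset\Gamma(H/\tau)$, and a nullhomotopy $D^{n-d}\to\Gamma(H/\tau)$ lets one replace $\Phi$ on $\mathrm{st}(\sigma_0)=\sigma_0*\mathrm{lk}(\sigma_0)$ by the join map $\bdy\sigma_0*D^{n-d}\to\bdy\tau*\Gamma(H/\tau)$, strictly lowering the complexity. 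The hazards you flag (arranging the triangulation so that the relevant link is an honest sphere, retriangulating after each rerouting, controlling what happens when $\Phi$ is not injective on vertices) are real but manageable; the argument goes through once one works with one $\sigma_0$ at a time rather than with the whole preimage $K$.

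The paper sidesteps all of this by reversing the quantifier. It assumes the theorem fails, so that for \emph{every} $\bdy$-simplex $\tau^\bdy$ the complex $V_\tau=\Gamma(H/\tau^\bdy)$ is nonempty and $(n-1-\dim\tau^\bdy)$-connected. Passing to the dual cell decomposition $\Sigma^*$ of the triangulated ball, it then builds a map $\Psi:B'\to\Gamma(H)$ skeleton by skeleton, sending each dual cell $\tau^*$ into $V_\tau$; the connectivity hypothesis is exactly what is needed to extend at each stage. On $\bdy B'$ this reproduces $\iota(S)$, contradicting nontriviality of $\phi$. This global dual-cell filling avoids any minimality bookkeeping or link-surgery, at the cost of having to set up the product-collar and dual decomposition carefully near $\bdy B$. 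Your approach trades that one-time setup for an inductive surgery; both reach the same conclusion.
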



\begin{proof}
If $\Gamma(H) = \emptyset$ then the result is immediate, as any surface obtained by $\bdy$-compressing an incompressible surface must also be incompressible. 

If $\Gamma(H) \ne \emptyset$ then, by assumption, there is a non-trivial map $\iota$ from an $(n-1)$-sphere $S$ into the $(n-1)$-skeleton of  $\Gamma(H)$. Assuming the theorem is false will allow us to inductively construct a map  $\Psi$ of a $n$-ball $B$ into $\Gamma(H)$ such that $\Psi(\bdy B)=\iota(S)$. The existence of such a map contradicts the non-trivialty of $\iota$. 


Note that $\Gamma(H) \subset \Gamma(H;\bdy M)$. If  $\pi_{n-1}(\Gamma(H;\bdy M)) \ne 1$ then the result is immediate. Otherwise,  $\iota$ can be extended to a map from  an $n$-ball $B$ into $\Gamma(H;\bdy M)$.

Let $\Sigma$ denote a triangulation of $B$ so that the map $\iota$ is simplicial. If $\tau$ is a simplex of $\Sigma$ then we let $\tau^{\bdy}$ denote the vertices of $\tau$ whose image under $\iota$ represent $\bdy$-compressions. If $\tau^{\bdy}=\emptyset$ then let $V_\tau=\Gamma(H)$.  Otherwise, let $V_\tau$ be the subspace of $\Gamma(H)$ spanned by the compressions that can be made disjoint from the disks represented by every vertex of $\iota(\tau^{\bdy})$. In other words, $V_\tau$ is the intersection of the link of $\tau^\bdy$ in $\Gamma(H;\bdy M)$ with $\Gamma(H)$. 

By Lemma \ref{l:CompressionEffect}, when $\tau^{\bdy} \ne \emptyset$ then $\Gamma(H/\tau^{\bdy})$ is precisely $V_\tau$. (More precisely, there is an embedding of $\Gamma(H/\tau^{\bdy})$ into $\Gamma(H)$ whose image is $V_\tau$.) By way of contradiction, we suppose the homotopy index of $\Gamma(H/\tau^{\bdy})$ is not less than or equal to  $n-{\rm dim}(\tau^{\bdy})$. Thus, $V_\tau \ne \emptyset$ and 
\begin{equation}
\label{e:contradictionRevisited}
\pi_i(V_\tau) =1 \mbox{ for all } i \le  n-1-{\rm dim}(\tau^{\bdy}). 
\end{equation}

\begin{clm}
\label{c:subsetRevisited}
Suppose $\tau$ is a cell of $\Sigma$ which lies on the boundary of a cell $\sigma$. Then $V_{\sigma} \subset V_{\tau}$. 
\end{clm}

\begin{proof}
Suppose $D \in V_{\sigma}$. Then $D$ can be made disjoint from disks represented by every vertex of $\iota(\sigma^{\bdy})$. Since $\tau$ lies on the boundary of $\sigma$, it follows that $\tau^{\bdy} \subset \sigma^{\bdy}$. Hence, $D$ can be made disjoint from the disks represented by every vertex of $\iota(\tau^{\bdy})$. It follows that $D \in V_\tau$. 
\end{proof}

Push the triangulation $\Sigma$ into the interior of $B$, so that $\rm{Nbhd}(\bdy B)$ is no longer triangulated (Figure \ref{f:SigmaDual}(b)). Then extend $\Sigma$ to a cell decomposition over all of $B$ by forming the product of each cell of $\Sigma \cap S$ with the interval $I$ (Figure \ref{f:SigmaDual}(c)). Denote this cell decomposition as $\Sigma'$. Note that $\iota$ extends naturally over $\Sigma'$, and the conclusion of Claim \ref{c:subsetRevisited} holds for cells of $\Sigma'$. Now let $\Sigma^*$ denote the dual cell decomposition of $\Sigma'$ (Figure \ref{f:SigmaDual}(d)). This is done in the usual way, so that there is a correspondence between the $d$-cells of $\Sigma ^*$ and the $(n-d)$-cells of $\Sigma'$. Note that, as in the figure, $\Sigma ^*$ is not a cell decomposition of all of $B$, but rather a slightly smaller $n$-ball, which we call $B'$. 

\begin{figure}
\psfrag{a}{(a)}
\psfrag{b}{(b)}
\psfrag{c}{(c)}
\psfrag{d}{(d)}
\begin{center}
\includegraphics[width=5 in]{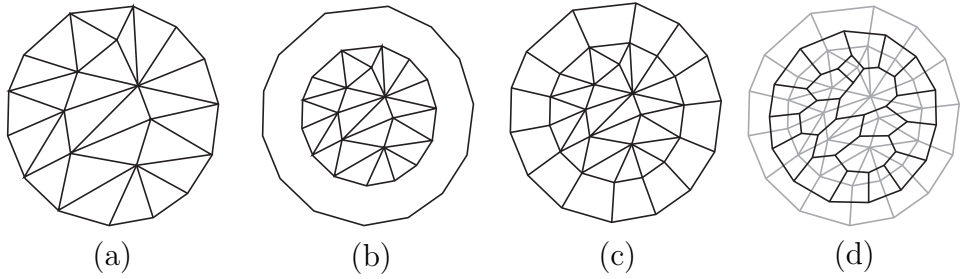}
\caption{(a) The triangulation $\Sigma$ of $B$. (b) Push $\Sigma$ into the interior of $B$. (c) Fill in $\rm{Nbhd}(\bdy B)$ with product cells to complete $\Sigma'$. (d) $\Sigma^*$ is the dual of $\Sigma'$.}
\label{f:SigmaDual}
\end{center}
\end{figure}

For each cell $\tau$ of $\Sigma'$, let $\tau^*$ denote its dual in $\Sigma^*$. Thus, it follows from Claim \ref{c:subsetRevisited} that if $\sigma^*$ is a cell of $\Sigma^*$ that is on the boundary of $\tau^*$, then $V_{\sigma} \subset V_\tau$. 

We now produce a contradiction by defining a continuous map $\Psi:B' \to \Gamma(H)$ such that $\Psi(\bdy B')=\iota(S)$. The map will be defined inductively on the $d$-skeleton of $\Sigma^*$ so that the image of every cell $\tau^*$ is contained in $V_\tau$. 

For each $0$-cell $\tau^* \in \Sigma^*$, choose a point in $V_\tau$ to be $\Psi(\tau^*)$. If $\tau^*$ is in the interior of $B'$ then this point may be chosen arbitrarily in $V_\tau$. If $\tau^* \in \bdy B'$ then $\tau$ is an $n$-cell of $\Sigma'$. This $n$-cell is $\sigma \times I$, for some $(n-1)$-cell $\sigma$ of $\Sigma \cap S$. But since $\iota(S) \subset \Gamma(H)$, it follows that $\tau^{\bdy}=\sigma ^{\bdy}=\emptyset$, and thus $V_\tau=\Gamma(H)$. We conclude $\iota(\tau) \subset V_\tau$, and thus we can choose $\tau^*$, the barycenter of $\iota(\tau)$, to be $\Psi(\tau^*)$. 

We now proceed to define the rest of the map $\Psi$ by induction. Let $\tau^*$ be a $d$-dimensional cell of $\Sigma^*$ and assume $\Psi$ has been defined on the $(d-1)$-skeleton of $\Sigma^*$. In particular, $\Psi$ has been defined on $\bdy \tau^*$. Suppose $\sigma^*$ is a cell on $\bdy \tau^*$.  By Claim \ref{c:subsetRevisited} $V_\sigma \subset V_\tau$. By assumption $\Psi|\sigma^*$ is defined and $\Psi(\sigma^*) \subset V_\sigma$. We conclude $\Psi(\sigma^*) \subset V_\tau$ for all $\sigma^* \subset \bdy \tau^*$, and thus
\begin{equation}
\label{e:boundary}
\Psi(\bdy \tau^*) \subset V_\tau.
\end{equation}

Note that  \[{\rm dim}(\tau) = n-{\rm dim}(\tau^*)=n-d.\] Since ${\rm dim}(\tau^{\bdy}) \le {\rm dim}(\tau)$, we have \[{\rm dim}(\tau^{\bdy}) \le n-d.\] Thus \[d \le n-{\rm dim}(\tau^{\bdy}),\] and finally \[d-1 \le n-1-{\rm dim}(\tau^{\bdy}).\]
It now follows from Equation \ref{e:contradictionRevisited} that $\pi_{(d-1)}(V_\tau)=1$. Since $d-1$ is the dimension of $\bdy \tau^*$, we can thus extend $\Psi$ to a map from $\tau^*$ into $V_\tau$. 

Finally, we claim that if $\tau^* \subset \bdy B'$ then this extension of $\Psi$ over $\tau^*$ can be done in such a way so that $\Psi(\tau^*) =\iota(\tau^*)$. This is because in this case each vertex of $\iota(\tau)$ is a compression, and hence $V_\tau=\Gamma(H)$. As $\iota(S) \subset \Gamma(H)=V_\tau$, we have $\iota(\tau^*) \subset V_\tau$. Thus we may choose $\Psi(\tau^*)$ to be $\iota(\tau^*)$.
\end{proof}

\begin{cor}
\label{c:FirstDistanceBound}
Suppose $M$ is a compact irreducible 3-manifold with incompressible boundary. Let $F$ denote a component of $\bdy M$. Let $H$ be a surface whose topological index is $n$. Then either $H$ is isotopic into a neighborhood of $\bdy M$ or there is a surface $H'$, obtained from $H$ by some sequence of (possibly simultaneous) $\bdy$-compressions, such that 
	\begin{enumerate}
		\item $H'$ has topological index at most $n$ with respect to $\bdy M$, and 
		\item The distance between $H \cap F$ and $H' \cap F$ is at most $3\chi(H')-3\chi(H)$.
	\end{enumerate}
\end{cor}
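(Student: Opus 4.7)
The plan is to iterate Theorem \ref{t:MainTheorem}. Set $H_0 = H$; at stage $i$, if $H_i$ has topological index at most $n$ with respect to $\bdy M$, stop and set $H' := H_i$; otherwise the theorem produces a simplex $\tau_i \in \Gamma(H_i; \bdy M) \setminus \Gamma(H_i)$ such that $H_{i+1} := H_i / \tau_i$ has topological index at most $n - \dim(\tau_i) \leq n$, and continue. Since $\tau_i \notin \Gamma(H_i)$, at least one vertex of $\tau_i$ is a $\bdy$-compression, so $\chi(H_{i+1}) \geq \chi(H_i) + 1$. A standard complexity argument on surfaces in a Haken manifold then forces the iteration to terminate after finitely many steps, producing $H' = H_k$ satisfying conclusion (1).

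To establish conclusion (2) I would track $\bdy H_i \cap F$ in the curve complex $\CC(F)$. Regular compression vertices of $\tau_i$ leave $\bdy H_i$ unchanged. A $\bdy$-compression disk $D \in \tau_i$ with $\bdy D = \alpha \cup \beta$ affects $\bdy H_i \cap F$ only when $\beta \subset F$, in which case it performs a band move along $\beta$. The disks of $\tau_i$ are pairwise disjoint, so after pushing off we may arrange the new loops of $\bdy H_{i+1} \cap F$ to be disjoint from $\bdy H_i \cap F$ on $F$ --- concretely, both families appear as disjoint components of the boundary of a regular neighborhood of $(\bdy H_i \cap F)$ together with the relevant $\beta$ arcs. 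Hence any essential component of $\bdy H_{i+1} \cap F$ sits at curve complex distance at most $1$ from any essential component of $\bdy H_i \cap F$. Summing via the triangle inequality and using that $k \leq \chi(H') - \chi(H)$ (one unit of $\chi$ per step) yields $d(H \cap F, H' \cap F) \leq \chi(H') - \chi(H) \leq 3\chi(H') - 3\chi(H)$, with the factor $3$ comfortably absorbing the constant per step.

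The main obstacle will be the degenerate scenario in which, at some intermediate stage, $\bdy H_i \cap F$ has no essential components on $F$, so that neither the distance bound nor the iteration continues cleanly. I would use Lemma \ref{l:EssentialBoundary}, applied to the topologically minimal surface $H_i$, to pin down every inessential loop of $\bdy H_i \cap F$ as the boundary of a $\bdy$-parallel disk component of $H_i$. A careful innermost-disk argument on $F$ then shows that such a total loss of essential loops on $F$ forces $H$ itself to be isotopic into a neighborhood of $\bdy M$, realizing the first alternative of the corollary.
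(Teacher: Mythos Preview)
Your iteration of Theorem \ref{t:MainTheorem} to produce the sequence $H_0,H_1,\ldots$ and your use of Lemma \ref{l:EssentialBoundary} to handle the degenerate case match the paper's argument. The gap is in the per-step distance estimate for conclusion (2). Your claim that $\bdy H_{i+1}\cap F$ can be made disjoint from $\bdy H_i\cap F$ is valid only when all $\bdy$-compression disks in $\tau_i$ lie on the \emph{same} side of $H_i$. When $\tau_i$ contains $\bdy$-compressions in both complementary regions $\VV$ and $\WW$, the corresponding arcs $\beta$ on $F$ approach $\bdy H_i\cap F$ from both sides; after the simultaneous band moves, near a $\VV$-side band the new curve sits on the $\VV\cap F$ side of the old one, while near a $\WW$-side band it sits on the $\WW\cap F$ side, so along any sub-arc of $\bdy H_i\cap F$ joining endpoints of bands on opposite sides the new curve is forced to cross the old one. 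In particular, in that situation neither curve family arises as a sub-collection of the boundary of a regular neighbourhood of $(\bdy H_i\cap F)\cup\bigcup_j\beta_j$, so your concrete description fails.

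The paper handles exactly this two-sided case: if $k$ is the number of $\bdy$-compressions in $\tau_i$ and $p\le k/2$ of them lie on the minority side, then pushing $\bdy H_i\cap F$ slightly toward that side yields at most $4p\le 2k$ intersection points with $\bdy H_{i+1}\cap F$. Hempel's inequality $d(\alpha,\beta)\le 2+2\log_2|\alpha\cap\beta|$ (or the Farey-graph analogue when $F$ is a torus) then gives $d(\bdy H_i\cap F,\bdy H_{i+1}\cap F)\le 2+2\log_2(2k)\le 2+2k\le 3k$ for $k\ge 2$, while $k=1$ is the one-sided case you described. Since $\chi(H_{i+1})-\chi(H_i)=k$, summing over steps gives the stated bound $3\chi(H')-3\chi(H)$. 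Your sharper bound of $1$ per step (hence $\chi(H')-\chi(H)$ overall) would be lovely, but the disjointness it rests on is simply not available once bands occur on both sides.
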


\begin{proof}
We first employ Theorem \ref{t:MainTheorem} to obtain a sequence of surfaces, $\{H_i\}_{0=1}^m$ in $M$, such that 
	\begin{enumerate}
		\item $H_0=H$
		\item $H_{i+1}=H_i/\tau_i$, for some simplex $\tau_i \subset \Gamma(H_i; \bdy M) \setminus \Gamma(H_i)$.
		\item For each $i$ the topological index of $H_{i+1}$ in $M$ is $n_{i+1} \le n_i-\rm{dim}(\tau_i)$, where $n_0=n$.
		\item $H_m$ has topological index at most $n_m$ with respect to $\bdy M$. 
	\end{enumerate}

It follows from Lemma \ref{l:EssentialBoundary} that $\bdy H_i$ contains at least one component that is essential on $\bdy M$ for each $i<m$. The boundary of $H_m$ is essential if and only if $H_m$ is not a collection of $\bdy$-parallel disks. But in this case, the surface $H$ was isotopic into a neighborhood of $\bdy M$. Hence, it suffices to show that for all $i$, the distance between the loops of $H_i \cap F$ and $H_{i+1} \cap F$ is bounded by $3\chi(H_{i+1})-3\chi(H_i)$.

Let $\VV$ and $\WW$ denote the sides of $H_i$ in $M$. If the dimension of $\tau_i$ is $k-1$, then $H_{i+1}$ is obtained from $H_i$ by $k$ simultaneous $\bdy$-compressions. Hence, the difference between the Euler characteristics of $H_i$ and $H_{i+1}$ is precisely $k$. 

If $k=1$ then the loops of $\bdy H_i$ can be made disjoint from the loops of $\bdy H_{i+1}$. It follows that 
\[d(H_i \cap F, H_{i+1} \cap F) \le 1=k < 3k,\]
and thus the result follows. Henceforth, we will assume $k \ge 2$. 

Let $\{V_1,...,V_p\}$ denote the $\bdy$-compressions represented by vertices of $\tau_i$ that lie in $\VV$, and $\{W_1,...,W_q\}$ the $\bdy$-compressions represented by vertices of $\tau_i$ that lie in $\WW$. We will assume that $\VV$ and $\WW$ were labelled so that $p \le k/2$. The loops of $H_{i+1}\cap F$ are obtained from the loops of $H_i \cap F$ by band sums along the arcs of $V_i \cap F$ and $W_i \cap F$. By pushing the loops of $H_i \cap F$ slightly into $\VV$, we see that they meet the loops of $H_{i+1} \cap F$ at most $4p$ times. That is, \[|H_i \cap H_{i+1}\cap F| \le 4p \le 2k.\]

When $F$ is not a torus we measure the distance between curves $\alpha$ and $\beta$ in $F$ in its curve complex. By \cite{hempel:01}, this distance is bounded above by $2+2\log _2 |\alpha \cap \beta|$. Hence, the distance between $H_i \cap F$ and $H_{i+1} \cap F$ is at most $2+2\log_2 2k$. But for any positive integer $k$, $\log _2 2k \le k$. Hence, we have shown $d(H_i \cap F , H_{i+1} \cap F) \le 2+2k$. Since $k \ge 2$ it follows that  $2+2k \le 3k$, and thus the result follows.

When $F \cong T^2$, the distance between curves $\alpha$ and $\beta$ is measured in the Farey graph. In this case their distance is bounded above by $1+\log _2 |\alpha \cap \beta|$. As this bound is twice as good as before, the distance between $H_i \cap F$ and $H_{i+1}\cap F$ must satisfy the same inequality. 
\end{proof}

\section{Complicated amalgamations}
\label{s:DistanceBoundSection}

The results of this section are due to T. Li when the index of $H$ is zero or one \cite{li:08}. The arguments presented here for the more general statements borrow greatly from these ideas.

\begin{lem}
\label{l:EulerBound}
\cite{li:08} Let $M$ be a compact, orientable, irreducible 3-manifold with incompressible boundary. Suppose $H$ and $Q$ are properly embedded surfaces in $M$ that are both incident to some component $F$ of $\bdy M$, where $Q$ is incompressible and $\bdy$-incompressible, and every loop and arc of $H \cap Q$ is essential on both surfaces. Then one of the following holds:
	\begin{enumerate}
		\item There is an incompressible, $\bdy$-incompressible surface $Q'$ which meets $H$ in fewer arcs than $Q$ did. The surface $Q'$ is either isotopic to $Q$, or is an annulus incident to $F$. 
		\item The number of arcs in $H \cap Q$ which are incident to $F$ is at most $(1-\chi(H))(1-\chi(Q))$. 
	\end{enumerate}
\end{lem}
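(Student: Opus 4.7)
I will suppose conclusion (2) fails and derive conclusion (1) by a double-parallelism and surgery argument. The idea is to find two arcs of $H \cap Q$ that are parallel in both $H$ and $Q$, combine their parallelism rectangles into a properly embedded annulus $A$ in $M$, and use $A$ either to isotope $Q$ to a simpler position or to serve as a new incompressible annulus $Q'$ incident to $F$.

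The arcs of $H \cap Q$ incident to $F$ are essential on both surfaces and pairwise disjoint. A standard Euler characteristic argument -- cut $H$ along one representative from each parallelism class of such arcs and observe that the resulting pieces are neither bigons nor boundary-parallel half-disks, since the arcs are essential and pairwise non-parallel -- yields that the number of parallelism classes among $F$-incident arcs on $H$ is at most $1 - \chi(H)$, and analogously at most $1 - \chi(Q)$ on $Q$. Hence, if the number of $F$-incident arcs of $H \cap Q$ exceeds $(1-\chi(H))(1-\chi(Q))$, pigeonhole produces two arcs $\alpha_1, \alpha_2$ lying in a common parallelism class on both surfaces. The parallelisms give disks $R_H \subset H$ and $R_Q \subset Q$, each bounded by $\alpha_1, \alpha_2$ and two arcs in $\bdy M$ (at least one arc in each pair meeting $F$). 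Gluing $R_H$ to $R_Q$ along $\alpha_1 \cup \alpha_2$ yields a properly embedded annulus $A \subset M$ with at least one boundary circle on $F$.

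Now I analyze $A$. If $A$ is $\bdy$-parallel in $M$, then pushing the subdisk $R_Q \subset Q$ across the product region between $R_H$ and $R_Q$ and off $H$ produces a surface $Q'$ isotopic to $Q$ with the arcs $\alpha_1, \alpha_2$ (and no new arcs) removed from $Q' \cap H$; this realizes option (1) with $Q'$ isotopic to $Q$. Otherwise $A$ is essential, and, using that $\bdy M$ is incompressible and $M$ is irreducible, $A$ itself is incompressible and $\bdy$-incompressible; setting $Q' = A$ gives the required annulus incident to $F$, whose only remaining arcs of intersection with $H$ come from the pushed-off copy of $R_H$ (strictly fewer than in $Q$, since the interiors of $R_H$ and $R_Q$ are eliminated).

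\textbf{Main obstacle.} The trickiest step is the counting bound on parallelism classes: one must carefully account for $F$-incident versus non-$F$-incident arcs and for how the endpoints distribute among the various boundary components of $M$, ensuring that the bound $1 - \chi$ is really realized for $F$-incident essential arcs in the appropriate sense. A secondary delicacy is the surgery: confirming that in the essential case $Q' = A$ genuinely has strictly fewer arcs of intersection with $H$ than $Q$, and that the incompressibility and $\bdy$-incompressibility hypotheses on $Q$ pass to $Q'$ in both cases.
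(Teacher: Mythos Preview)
Your overall strategy---pigeonhole to find two $F$-incident arcs $\alpha,\beta$ of $H\cap Q$ parallel on both surfaces, form the annulus $A=R_H\cup R_Q$, then analyze $A$---is exactly the paper's. The problem is your case split on $A$.

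You divide into ``$A$ is $\bdy$-parallel'' versus ``otherwise $A$ is essential,'' asserting in the second case that incompressibility of $\bdy M$ and irreducibility of $M$ force $A$ to be incompressible and $\bdy$-incompressible. That implication fails. First, since $\alpha$ is essential on the $\bdy$-incompressible surface $Q$, the arc $\alpha$ is essential in $M$; as $\alpha$ is a spanning arc of $A$, this already makes $A$ $\bdy$-incompressible, so your $\bdy$-parallel case is in fact empty. Second, $A$ may nonetheless be \emph{compressible}: compressing yields two disks with inessential boundary on $\bdy M$, and irreducibility then shows that $A$ bounds a $1$-handle $V\cong D^2\times I$ on one side (with $A=\bdy D^2\times I$ and $D^2\times\{0,1\}\subset\bdy M$). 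Such an $A$ is neither $\bdy$-parallel nor essential, so it escapes your dichotomy entirely---and this is precisely the situation in which one isotopes $Q$. The $1$-handle $V$ is the ``product region between $R_H$ and $R_Q$'' you were reaching for: sliding $R_Q$ across $V$ onto $R_H$ gives a $Q'$ isotopic to $Q$ with at least two fewer $F$-incident arcs of intersection with $H$. So the correct split is ``$A$ incompressible'' (take $Q'=A$) versus ``$A$ compressible'' (isotope $Q$ through the $1$-handle), not ``$\bdy$-parallel'' versus ``essential.'' A minor additional omission: you should note that $\alpha,\beta$ can be chosen so that $A$ is actually embedded.
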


\begin{proof}
There can be at most $1-\chi(H)$ non-parallel essential arcs on $H$, and at most $1-\chi(Q)$ non-parallel essential arcs on $Q$. Hence, if the number of arcs in $H \cap Q$ incident to $F$ is larger than $(1-\chi(H))(1-\chi(Q))$, then there must be at least two arcs $\alpha$ and $\beta$ of $H \cap Q$, incident to $F$, that are parallel on both $H$ and $Q$.  Suppose this is the case, and let $R_H$ and $R_Q$ denote the rectangles cobounded by $\alpha$ and $\beta$ on $H$ and $Q$, respectively. Note that $\alpha$ and $\beta$ can be chosen so that $A=R_H \cup R_Q$ is an embedded annulus. 

Since $\alpha$ is essential on $Q$ and $Q$ is $\bdy$-incompressible, it follows that $\alpha$ is essential in $M$. As $\alpha$ is also contained in $A$, we conclude $A$ is $\bdy$-incompressible. If $A$ is also incompressible then the result follows, as $A$ meets $H$ in fewer arcs than $Q$, and $A$ is incident to $F$. 

If $A$ is compressible then it must bound a 1-handle $V$ in $M$, since it contains an arc that is essential in $M$. The 1-handle $V$ can be used to guide an isotopy of $Q$ that takes $R_Q$ to $R_H$. This isotopy may remove other components of $Q \cap V$ as well. The resulting surface $Q'$ meets $H$ in at least two fewer arcs that are incident to $F$. 
\end{proof}

\begin{lem}
\label{l:SecondDistanceBound}
Let $M$ be a compact, orientable, irreducible 3-manifold with incompressible boundary, which is not an $I$-bundle. Suppose $H$ and $Q$ are properly embedded surfaces in $M$ that are both incident to some component $F$ of $\bdy M$, where $H$ is topologically minimal with respect to $\bdy M$, and $Q$ is an incompressible, $\bdy$-incompressible surface with maximal Euler characteristic. Then the distance between $H \cap F$ and $Q \cap F$ is at most $4+2(1-\chi(H))(1-\chi(Q))$.
\end{lem}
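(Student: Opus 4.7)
The plan is to iteratively apply Lemma \ref{l:EulerBound} to reduce the number of arcs of $H \cap Q$ that are incident to $F$, and then to convert the resulting arc bound into a curve-complex distance bound using Hempel's estimate, exactly as in the proof of Corollary \ref{c:FirstDistanceBound}.

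First I would apply Lemma \ref{l:EssentialIntersection} to isotope $H$ so that every component of $H \cap Q$ is a loop or arc essential on both surfaces. Then I would iterate Lemma \ref{l:EulerBound}. Whenever case (1) of that lemma produces a surface $Q'$ isotopic to $Q$, replace $Q$ by $Q'$: the number of arcs of $H \cap Q$ incident to $F$ strictly decreases while the curve system $Q \cap F$ is unchanged up to isotopy. If instead case (1) produces an annulus $A$ incident to $F$ that is not isotopic to $Q$, the maximality of $\chi(Q)$ forces $\chi(Q) \ge \chi(A) = 0$; since $F$ is incompressible, $Q$ cannot be an essential disk, so $\chi(Q) = 0$ and $Q$ is itself an annulus. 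The assumption that $M$ is not an $I$-bundle limits the supply of non-isotopic essential annuli incident to $F$, so only finitely many annulus replacements are possible, and each changes $Q \cap F$ by a bounded distance in the curve complex of $F$. The iteration therefore terminates at a stage where case (2) of Lemma \ref{l:EulerBound} applies.

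Once in case (2), the number of arcs of $H \cap Q$ incident to $F$ is at most $(1-\chi(H))(1-\chi(Q))$. Each such arc contributes at most two points to $(H \cap F) \cap (Q \cap F)$, so
\[ |(H \cap F) \cap (Q \cap F)| \le 2(1-\chi(H))(1-\chi(Q)). \]
Applying Hempel's curve-complex bound as in the proof of Corollary \ref{c:FirstDistanceBound}, together with $\log_2 m \le m$ for every positive integer $m$, gives
\[ d(H \cap F, Q \cap F) \le 2 + 2 \log_2 \bigl( 2(1-\chi(H))(1-\chi(Q)) \bigr) \le 4 + 2(1-\chi(H))(1-\chi(Q)). \]
When $F \cong T^2$ the underlying Farey-graph bound is twice as strong, so the same inequality holds.

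The main obstacle is the annulus-replacement case: establishing that when $Q$ is swapped for an incompressible, $\bdy$-incompressible annulus $A$ provided by Lemma \ref{l:EulerBound}, the curve system $A \cap F$ stays within a uniformly bounded distance of the original $Q \cap F$ in the curve complex. The hypothesis that $M$ is not an $I$-bundle is essential here, since an $I$-bundle structure would allow an unbounded family of non-isotopic essential annuli and thus permit arbitrarily large drift of $Q \cap F$ during the iteration.
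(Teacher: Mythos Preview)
Your treatment of the non-annulus case is essentially the paper's argument: once $Q$ has maximal Euler characteristic and is not an annulus, case (1) of Lemma \ref{l:EulerBound} can never return an annulus (that would contradict maximality), so minimizing the number of essential arcs forces case (2), and the Hempel bound finishes exactly as you wrote.

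The genuine gap is in the annulus case, and you have identified it yourself. Your proposed fix---``$M$ is not an $I$-bundle, so there are only finitely many non-isotopic essential annuli incident to $F$, and each replacement moves $Q\cap F$ a bounded distance''---does not work as stated. First, a manifold that is not an $I$-bundle can still contain infinitely many isotopy classes of essential annuli incident to $F$ (think of a Seifert piece with boundary), so the finiteness claim is false in general. Second, even granting finiteness, ``bounded distance per step times finitely many steps'' gives a constant depending on $M$, not the uniform $4$ in the statement; you would not recover the bound $4+2(1-\chi(H))(1-\chi(Q))$.

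The paper closes this gap by invoking Lemma 3.2 of \cite{li:08} directly: when $Q$ is an annulus and $M$ is not an $I$-bundle, that lemma produces in one stroke an incompressible, $\bdy$-incompressible annulus $Q'$ meeting $H$ in the fewest possible essential arcs with $d(Q\cap F, Q'\cap F)\le 2$. One then applies case (2) of Lemma \ref{l:EulerBound} to $Q'$ to get $d(H\cap F, Q'\cap F)\le 2+2(1-\chi(H))(1-\chi(Q'))$, and the triangle inequality yields the extra $+2$ that turns the $2$ into the $4$ in the final bound. So the missing ingredient is precisely Li's lemma; it is not replaced by a soft finiteness argument.
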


\begin{proof}
By Lemma \ref{l:EssentialIntersection} $H$ and $Q$ can be isotoped so that they meet in a collection of loops and arcs that are essential on both surfaces. Assume first $Q$ is not an annulus, and it meets $H$ in the least possible number of essential arcs. Then by Lemma \ref{l:EulerBound}, the number of arcs in $H \cap Q$ incident to $F$ is at most $(1-\chi(H))(1-\chi(Q))$. As each such arc has at most two endpoints on $F$, \[|H \cap Q \cap F| \le 2(1-\chi(H))(1-\chi(Q)).\] 

As in the proof of Corollary \ref{c:FirstDistanceBound}, when $F$ is not a torus we measure the distance between curves $\alpha$ and $\beta$ in $F$ in its curve complex. By \cite{hempel:01}, this distance is bounded above by $2+2\log _2 |\alpha \cap \beta|$. So we have, \[d(H \cap F, Q \cap F) \le 2+2\log _2 2(1-\chi(H))(1-\chi(Q)).\] But for any positive integer $n$, $\log _2 2n \le n$. Hence,
\begin{eqnarray*}
d(H \cap F, Q \cap F) &\le& 2+2(1-\chi(H))(1-\chi(Q))\\
&\le & 4+2(1-\chi(H))(1-\chi(Q)).
\end{eqnarray*}

When $F \cong T^2$, the distance between curves $\alpha$ and $\beta$ is measured in the Farey graph. In this case their distance is bounded above by $1+\log _2 |\alpha \cap \beta|$. As this bound is twice as good as before, the distance between $H \cap F$ and $Q \cap F$ must satisfy the same inequality. 

If $Q$ is an annulus, then since $M$ is not an $I$-bundle we may apply  Lemma 3.2 of \cite{li:08}, which says that $Q \cap F$ is at most distance 2 away from $Q' \cap F$, for some incompressible, $\bdy$-incompressible annulus $Q'$ that meets $H$ in the least possible number of essential arcs. By Lemma \ref{l:EulerBound}, the number of arcs in $H \cap Q'$ incident to $F$ is at most $(1-\chi(H))(1-\chi(Q'))$. As above, this implies the distance between $H \cap F$ and $Q' \cap F$ is at most \[2+2(1-\chi(H))(1-\chi(Q')).\] It follows that the distance between $H \cap F$ and $Q \cap F$ is at most $4+2(1-\chi(H))(1-\chi(Q))$. 
\end{proof}

\begin{lem}
\label{l:ThirdDistanceBound}
Let $M$ be a compact, orientable, irreducible 3-manifold with incompressible boundary, which is not an $I$-bundle. Suppose $H$ and $Q$ are properly embedded surfaces in $M$ that are both incident to some component $F$ of $\bdy M$, where $H$ is topologically minimal and $Q$ is an incompressible, $\bdy$-incompressible surface with maximal Euler characteristic. Then either $H$ is isotopic into a neighborhood of $\bdy M$, or the distance between $H \cap F$ and $Q \cap F$ is at most $4+3(1-\chi(H))(1-\chi(Q))$.
\end{lem}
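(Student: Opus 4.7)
The plan is to reduce this lemma to Lemma \ref{l:SecondDistanceBound} via Corollary \ref{c:FirstDistanceBound}, so as to drop the hypothesis ``topologically minimal with respect to $\bdy M$'' in favor of the weaker ``topologically minimal.'' I would begin by applying Corollary \ref{c:FirstDistanceBound} to $H$: either $H$ is isotopic into a neighborhood of $\bdy M$ (and we are done), or there is a surface $H'$, obtained from $H$ by a sequence of (possibly simultaneous) $\bdy$-compressions, such that $H'$ is topologically minimal with respect to $\bdy M$ and $d(H \cap F, H' \cap F) \le 3\chi(H')-3\chi(H)$. Since $H'$ is topologically minimal with respect to $\bdy M$, Lemma \ref{l:SecondDistanceBound} then applies to the pair $(H',Q)$, yielding
\[
d(H'\cap F,\, Q\cap F) \;\le\; 4 + 2(1-\chi(H'))(1-\chi(Q)).
\]
The triangle inequality in the curve complex of $F$ (or the Farey graph, when $F\cong T^2$) now gives
\[
d(H\cap F,\, Q\cap F) \;\le\; 3(\chi(H')-\chi(H)) + 4 + 2(1-\chi(H'))(1-\chi(Q)).
\]

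What remains is an algebraic verification that this composite bound is at most $4+3(1-\chi(H))(1-\chi(Q))$. Writing $k=\chi(H')-\chi(H)\ge 0$, $A=1-\chi(H)$, and $B=1-\chi(Q)$, the target inequality reduces to $k(3-2B) \le AB$. When $\chi(Q)\le -1$ (so $B\ge 2$) the left-hand side is non-positive and the inequality is automatic. The delicate case is $\chi(Q)=0$ (so $Q$ is an annulus, $B=1$), in which the inequality collapses to $k\le A$, i.e.\ to $\chi(H')\le 1$.

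Closing this gap is the only real wrinkle. I would argue, mirroring the reasoning in the proof of Corollary \ref{c:FirstDistanceBound}, that at least one component of $\bdy H'$ is essential on $\bdy M$: by Lemma \ref{l:EssentialBoundary} otherwise $H'$ is a union of boundary-parallel disks, in which case $H$ itself is isotopic into a neighborhood of $\bdy M$, the excluded case. Hence $H'$ has nonempty boundary and $\chi(H')\le 1$, giving $k\le A$. One also needs $A\ge 1$, which holds because $H$ is incident to $F$ and cannot be a disk (incompressibility of $\bdy M$) or a sphere (having nonempty boundary). The only obstacle is this bookkeeping; the essential mathematical content has already been established in Corollary \ref{c:FirstDistanceBound} and Lemma \ref{l:SecondDistanceBound}, and the present lemma amounts to packaging them together and noting that the annulus case for $Q$ is precisely what forces the leading coefficient to rise from $2$ to $3$.
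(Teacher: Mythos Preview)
Your proposal is correct and follows essentially the same route as the paper: apply Corollary~\ref{c:FirstDistanceBound} to obtain $H'$, then Lemma~\ref{l:SecondDistanceBound} to $(H',Q)$, combine via the triangle inequality, and split the algebra into the cases $\chi(Q)<0$ and $\chi(Q)=0$. Your bookkeeping in the annulus case is in fact slightly more explicit than the paper's, which silently uses $\chi(H')\le 0$ at the corresponding step; your weaker requirement $\chi(H')\le 1$, together with the justification via Lemma~\ref{l:EssentialBoundary}, suffices and is well argued.
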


\begin{proof}
If $H$ is not isotopic into a neighborhood of $\bdy M$ then by Corollary \ref{c:FirstDistanceBound} we may obtain a surface $H'$ from $H$ by a sequence of $\bdy$-compressions, which is topologically minimal with respect to $\bdy M$, where \[d(H \cap F,H' \cap F) \le 3\chi(H')-3\chi(H).\]

By Lemma \ref{l:SecondDistanceBound}, 
\[d(H' \cap F, Q \cap F) \le 4+2(1-\chi(H'))(1-\chi(Q)).\]

Putting these together gives:
\begin{eqnarray*}
d(H \cap F,Q \cap F) & \le & d(\bdy H, \bdy H') + d(\bdy H', \bdy Q)\\
& \le & 3\chi(H')-3\chi(H) + 4+2(1-\chi(H'))(1-\chi(Q))\\
&=&4+2(1-\chi(H))(1-\chi(Q))\\
&&+ (\chi(H')-\chi(H))(1+2\chi(Q))
\end{eqnarray*}

When $\chi(Q)<0$, then the expression $(\chi(H')-\chi(H))(1+2\chi(Q))$ is negative. Hence, we have

\begin{eqnarray*}
d(H \cap F,Q \cap F) & \le & 4+2(1-\chi(H))(1-\chi(Q))\\
&&+ (\chi(H')-\chi(H))(1+2\chi(Q))\\
& \le & 4+2(1-\chi(H))(1-\chi(Q))\\
& \le & 4+3(1-\chi(H))(1-\chi(Q))
\end{eqnarray*}

On the other hand, when $\chi(Q)=0$ then we have

\begin{eqnarray*}
d(H \cap F,Q \cap F) & \le & 4+2(1-\chi(H))(1-\chi(Q))\\
&&+ (\chi(H')-\chi(H))(1+2\chi(Q))\\
&=&6-3\chi(H)+\chi(H')\\
& \le & 6-3\chi(H)\\
&=&3+3(1-\chi(H))\\
&=&3+3(1-\chi(H))(1-\chi(Q))\\
&\le&4+3(1-\chi(H))(1-\chi(Q))
\end{eqnarray*}

\end{proof}

\begin{thm}
\label{t:DistanceBoundTheorem}
Let $X$  be a compact, orientable (not necessarily connected), irreducible 3-manifold with incompressible boundary, such that no component of $X$ is an $I$-bundle. Suppose some components $F_-$ and $F_+$ of $\bdy X$ are homeomorphic. Let $Q$ denote an incompressible, $\bdy$-incompressible (not necessarily connected) surface in $X$  of maximal Euler characteristic that is incident to both $F_-$ and $F_+$. Let $K=14(1-\chi(Q))$. Suppose $\phi:F_- \to F_+$ is a gluing map such that \[d(\phi(Q \cap F_-), Q \cap F_+) \ge Kg.\] Let $M$ denote the manifold obtained from $X$ by gluing $F_-$ to $F_+$ via the map $\phi$. Let $F$ denote the image of $F_-$ in $M$. Then any closed, topologically minimal surface in $M$ whose genus is at most $g$ can be isotoped to be disjoint from $F$.
\end{thm}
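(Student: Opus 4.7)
The plan is to argue by contradiction. Suppose $H \subset M$ is a closed topologically minimal surface of genus $g' \le g$ that cannot be isotoped disjoint from $F$. My first step is to cut $M$ along $F$ and exploit Theorem \ref{t:OriginalIntersection} to isotope $H$ so that $H \cap F$ is a collection of essential simple closed curves on $F$, while $H_X := H \cap X$ is topologically minimal in $X$. By Lemma \ref{l:EssentialBoundary}, after discarding any boundary-parallel disk components, every loop of $\partial H_X$ is essential on $\partial X$. The assumption that $H$ is not isotopic off $F$ in $M$ rules out $H_X$ being isotopic into a collar of $\partial X$ in $X$, since any such collar isotopy could be pushed across $F$ to disjoin $H$ from $F$. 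This places us in the non-exceptional case of Lemma \ref{l:ThirdDistanceBound}.

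The second step is to apply Lemma \ref{l:ThirdDistanceBound} to the pair $(H_X, Q)$ twice, once with respect to $F_-$ and once with respect to $F_+$, obtaining
\[
d(H_X \cap F_\pm, Q \cap F_\pm) \le 4 + 3(1 - \chi(H_X))(1 - \chi(Q))
\]
in the curve complex (or Farey graph, when $F \cong T^2$) of the corresponding boundary component. The combinatorial crux is that in $M$, each simple closed curve of $H \cap F$ pulls back to exactly one curve of $H_X \cap F_-$ and one curve of $H_X \cap F_+$, and these are identified by the gluing map $\phi$; hence $\phi(H_X \cap F_-) = H_X \cap F_+$ as multicurves on $F$. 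Using that $\phi$ induces an isometry of the curve complex of $F$, the triangle inequality yields
\[
d(\phi(Q \cap F_-), Q \cap F_+) \le d(Q \cap F_-, H_X \cap F_-) + d(H_X \cap F_+, Q \cap F_+) \le 8 + 6(1 - \chi(H_X))(1 - \chi(Q)).
\]

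The third step is to translate this into a bound in terms of $g$. Since $H \cap F$ is transverse, cutting preserves Euler characteristic, so $\chi(H_X) = \chi(H) = 2 - 2g' \ge 2 - 2g$ and therefore $1 - \chi(H_X) \le 2g - 1$. Substituting and comparing with the hypothesis $d(\phi(Q \cap F_-), Q \cap F_+) \ge Kg = 14g(1 - \chi(Q))$ would give
\[
14g(1 - \chi(Q)) \le 8 + 6(2g - 1)(1 - \chi(Q)),
\]
which fails for every $g \ge 1$; the case $g = 0$ is handled separately, as any closed topologically minimal sphere in an irreducible 3-manifold bounds a ball and is therefore isotopic off $F$.

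The main obstacle I anticipate lies in the first step. Theorem \ref{t:OriginalIntersection} permits up to $p \le n$ saddle tangencies of $H$ with $F$ (where $n$ is the topological index of $H$), and each saddle decreases $\chi(H_X)$ by one, inflating the right-hand side of the distance estimate by $6p(1 - \chi(Q))$. If $p$ is large this would swamp the comparison with $Kg$. I expect to resolve this either by arranging $p = 0$ in the present setting (as the introduction's sketch suggests is possible when $F$ is the gluing surface), or by routing the estimate through Corollary \ref{c:FirstDistanceBound} so that the saddle count is absorbed into a controlled number of $\partial$-compressions whose cost fits inside the slack between $12g + 2$ and the chosen $K = 14g(1-\chi(Q))$. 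A secondary subtlety is that $X$ may be disconnected and $F_\pm$ may lie in distinct components; the triangle-inequality argument is unchanged, but Lemma \ref{l:ThirdDistanceBound} is then invoked separately in each component containing an end of $H_X$.
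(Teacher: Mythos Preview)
Your overall strategy---cut along $F$, apply Lemma~\ref{l:ThirdDistanceBound} on each side, and use the triangle inequality---is exactly the paper's. The gap you flag is real, but your diagnosis of it is backwards, and neither of your proposed fixes is what is needed.

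The effect of saddles on Euler characteristic goes the other way. If $H$ meets $F$ in $p$ saddles, then $H \cap N(F)$ is a cobordism carrying a Morse function with $p$ index-$1$ critical points, so $\chi(H \cap N(F)) = -p$ and hence $\chi(H_X) = \chi(H) + p$. Thus saddles \emph{increase} $\chi(H_X)$ and \emph{decrease} $1-\chi(H_X)$; they can only shrink the right-hand side of your estimate, and the bound $1 - \chi(H_X) \le 2g-1$ in Step~3 survives (indeed improves).

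The actual damage saddles do is to Step~2. With saddles present, $H \cap F$ is a $4$-valent graph on $F$ rather than a multicurve, and the curve systems $H_X \cap F_-$ and $H_X \cap F_+$ sit on opposite sides of this graph; they are \emph{not} identified by $\phi$, so your equality $\phi(H_X \cap F_-) = H_X \cap F_+$ fails and the two-term triangle inequality is unjustified. The paper's resolution is a one-line observation you are missing: both curve systems, projected to $F$, lie on the boundary of a regular neighborhood of the graph $H \cap F$, hence are disjoint, and therefore
\[
d\bigl(\phi(H_X \cap F_-),\, H_X \cap F_+\bigr) \le 1.
\]
Insert this as a middle term in the triangle inequality; the bound becomes $9 + 6(1-\chi(H_X))(1-\chi(Q))$ in place of your $8 + 6(1-\chi(H_X))(1-\chi(Q))$, and the comparison with $Kg$ still yields a contradiction for $g \ge 2$. (The statement is vacuous for $g \le 1$: spheres in an irreducible manifold bound balls, and tori are excluded by the definition of topologically minimal.) There is no need to arrange $p=0$, which is not generally possible, nor to reroute through Corollary~\ref{c:FirstDistanceBound}.
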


\begin{proof}
Suppose $H$ is a topologically minimal surface in $M$ whose genus is at most $g$. By Theorem \ref{t:OriginalIntersection} $H$ may be isotoped so that it meets $F$ in a collection of saddles, and so that the components of $H \setminus N(F)$ are topologically minimal in $M \setminus N(F)$. Note that $M \setminus N(F) = X'$, where $X' \cong X$. We denote the images of $F_-$ and $F_+$ in $X'$ by the same names. Let $H'=H \cap X'$. By Lemma \ref{l:EssentialBoundary}, $\bdy H'$ consists of essential loops on $\bdy X'$. When projected to $F$, these loops are all on the boundary of a neighborhood of the 4-valent graph $H \cap F$, it follows that the distance between $H' \cap F_-$ and $H' \cap F_+$ is at most one.

If $H$ could not have been isotoped to be disjoint from $F$, then the surface $H'$ can not be isotopic into a neighborhood of $\bdy X'$. We may thus apply Lemma \ref{l:ThirdDistanceBound} to obtain:

\begin{eqnarray*}
d(Q \cap F_-, Q \cap F_+) & \le & d(Q \cap F_-, H' \cap F_-)+ d(H' \cap F_-, H' \cap F_+)\\
&&+d(Q \cap F_+, H' \cap F_+)\\
&\le& 2(4+3(1-\chi(H'))(1-\chi(Q)))+1\\
& = & 9+6(1-\chi(H'))(1-\chi(Q))\\
& \le & 9+6(1-\chi(H))(1-\chi(Q))\\
&\le & 9+6(2g-1)(1-\chi(Q))\\
&=& 9-6(1-\chi(Q))+12g(1-\chi(Q))
\end{eqnarray*}

Note that the theorem only has content when $g \ge 2$, since by definition a torus cannot be topologically minimal. Also, $Q$ has non-empty boundary, so $1-\chi(Q) \ge 1$. It follows that  

\begin{eqnarray*}
d(Q \cap F_-, Q \cap F_+) &\le& 9-6(1-\chi(Q))+12g(1-\chi(Q)) \\
& < & 2g(1-\chi(Q)) + 12g(1-\chi(Q))\\
&=&14g(1-\chi(Q))\\
&=&Kg
\end{eqnarray*}
\end{proof}

\bibliographystyle{alpha}

\begin{thebibliography}{Hem01}

\bibitem[Baca]{StabilizationResults}
D.~Bachman.
\newblock Heegaard splittings of sufficiently complicated 3-manifolds {I}:
  {S}tabilization.
\newblock Preprint.

\bibitem[Bacb]{AmalgamationResults}
D.~Bachman.
\newblock Heegaard splittings of sufficiently complicated 3-manifolds {II}:
  {A}malgamation.
\newblock Preprint.

\bibitem[Bacc]{TopIndexI}
D.~Bachman.
\newblock Topological {I}ndex {T}heory for surfaces in 3-manifolds.
\newblock Preprint.

\bibitem[Bac02]{crit}
D.~Bachman.
\newblock Critical {H}eegaard surfaces.
\newblock {\em Trans. Amer. Math. Soc.}, 354(10):4015--4042 (electronic), 2002.

\bibitem[BJ]{existence}
D.~Bachman and J.~Johnson.
\newblock On the existence and non-existence of topologically minimal surfaces.
\newblock In preparation.

\bibitem[CG87]{cg:87}
A.~J. Casson and C.~McA. Gordon.
\newblock Reducing {H}eegaard splittings.
\newblock {\em Topology and its Applications}, 27:275--283, 1987.

\bibitem[Hem01]{hempel:01}
J.~Hempel.
\newblock 3-manifolds as viewed from the curve complex.
\newblock {\em Topology}, 40:631--657, 2001.

\bibitem[Li]{li:08}
T.~Li.
\newblock Heegaard surfaces and the distance of amalgamation.
\newblock Preprint.

\end{thebibliography}

\end{document}